\newtheorem{thm}{Theorem}[section]
\numberwithin{equation}{section}
\theoremstyle{remark}
\newtheorem{rem}[thm]{Remark}%[section]    %@@!!@@!!
\def\bint{{\ifinner\rlap{\bf\kern.35em--}
\int\else\rlap{\bf\kern.45em--}\int\fi}\ignorespaces}
\def\la{\langle}
\def\ra{\rangle}
\newcommand{\R}{\mathbb{R}}
\def\dliminf{\displaystyle\liminf}
\newcommand{\id}{\mathrm{Id}}
\newcommand{\eps}{\varepsilon}
\begin{document}

\title[Transport maps between $\nicefrac{1}{d}$-concave densities]{On optimal transport maps\\ between $\nicefrac{1}{d}$-concave densities} 
\author{Guillaume Carlier}
\thanks{CEREMADE, UMR CNRS 7534, Universit\'e Paris
Dauphine, PSL, Pl. de Lattre de Tassigny, 75775 Paris Cedex 16, France and INRIA-Paris,
\texttt{carlier@ceremade.dauphine.fr}}
\author{Alessio Figalli}
\thanks{ETH Z\"{u}rich, Department of Mathematics, R\"{a}mistrasse 101, 8092, Z\"{u}rich, Switzerland,
\texttt{alessio.figalli@math.ethz.ch}}
\author{Filippo Santambrogio}
\thanks{Universite Claude Bernard Lyon 1, ICJ UMR5208, CNRS, \'Ecole Centrale de Lyon, INSA Lyon, Universit\'e Jean Monnet,
69622 Villeurbanne cedex, France, \texttt{santambrogio@math.univ-lyon1.fr}}
\date{\today}

\maketitle

\begin{abstract}
%Motivated by Caffarelli's contraction theorem which gives an estimate of the Lipschitz constant of the optimal transport map between log-concave probability densities on $\R^d$, we consider the larger class of ($\nicefrac{1}{d}$-concave) densities which are of the form $V^{-d}$ with $V$ convex. Under suitable assumptions, we establish linear (or sublinear) bounds on the optimal transport map and then a global Lipschitz estimate in the spirit of Caffarelli's theorem. 

In this paper, we extend the scope of Caffarelli's contraction theorem, which provides a measure of the Lipschitz constant for optimal transport maps between log-concave probability densities in $\R^d$. Our focus is on a broader category of densities, specifically those that are $\nicefrac{1}{d}$-concave and can be represented as $V^{-d}$, where $V$ is convex. By setting appropriate conditions, we derive linear or sublinear limitations for the optimal transport map. This leads us to a comprehensive Lipschitz estimate that aligns with the principles established in Caffarelli's theorem.

\end{abstract}

\smallskip
\noindent \textbf{Keywords.} Monge-Amp\`ere equation, Lipschitz change of variables, polynomially decaying densities. 

\smallskip
\noindent \textbf{Mathematics Subject Classification.} 35J96, 35B65.

\section{Introduction}

Given two probability densities $f$ and $g$ on $\R^d$ with finite second moments, the quadratic optimal transport problem between $\mu(\mbox{d} x)=f(x) \mbox{d}x$ and $\nu(\mbox{d} y)=g(y) \mbox{d}y$ consists in finding the change of variables $T$ between $\mu$ and $\nu$ that minimizes the mean squared displacement:
\[\int_{\R^d} \vert x-T(x)\vert^2 f(x) \mbox{d}x \]
subject to the constraint $T_\# \mu=\nu$ requiring that $T$ transports $\mu$ to $\nu$,  i.e., 
$$
\text{$\int_{T^{-1}(A)}  f(x) \mbox{d}x=\int_A g(y) \mbox{d}y\quad$ for every Borel subset $A$ of $\R^d$.}
$$
A  seminal work of Brenier \cite{Brenier} states that there exists a unique (up to negligible sets for $\mu$) solution $T$ to this quadratic optimal transport problem and that $T$  is characterized by the fact that it has a convex potential i.e. it is of the form $T=\nabla \varphi$ with $\varphi$ convex. Without assuming that $\mu$ and $\nu$ have finite second  moments, McCann \cite{McCann} extended Brenier's result (with a different strategy based on cyclical monotonicity arguments rather than on optimal transport). The Brenier-McCann map $T=\nabla \varphi$ with $\varphi$ convex, seen as the monotone change of variables between the absolutely continuous measures $\mu$ and $\nu$ on $\R^d$, is by now considered as a fundamental object which  solves in particular in some suitable weak sense the Monge-Amp\`ere equation:
\begin{equation}
\label{eq:MA}
\det( D^2 \varphi ) \, g(\nabla \varphi)= f.
\end{equation}
 The regularity theory for the monotone transport and the corresponding Monge-Amp\`ere equation,  initiated by Caffarelli \cite{Caffarelli92}, has stimulated an intensive line of research in the last thirty years (see the survey \cite{DePFig} and the textbook \cite{Figallibook}). In the case we shall consider here of densities $f$ and $g$ which are (at least) $C^{0, \alpha}_{\mathrm{loc}}(\R^d)$ with $1/f,1/g \in L^{\infty}_{\mathrm{loc}}(\R^d)$, it follows from \cite{CorderoFigalli,FigalliJhaveri} (see also \cite{ADM}) that $\varphi \in C^{2, \alpha}_{\mathrm{loc}}(\R^d)$ and  solves the Monge-Amp\`ere equation in the classical sense. Regarding global estimates, a pathbreaking result of Caffarelli \cite{Caffarelli} states that if $f=e^{-V}$ and $g=e^{-W}$ for some $C^{1,1}_{\mathrm{loc}}(\R^d)$ functions $V$ and $W$ with $D^2 V \leq \Lambda \, \id$ and $D^2 W \geq \lambda\, \id$ ($\lambda>0$, $0\leq \Lambda <+\infty$) then $T$ is globally Lipschitz with the explicit bound $\Vert D T \Vert_{L^{\infty}(\R^d)} \leq \sqrt{\Lambda/\lambda}$. We refer the interested reader to \cite{CFJ}, \cite{ColomboFathi}, \cite{Kolesnikov}, \cite{FGP}, and the references therein for extensions, alternative proofs, and applications.
 
 Since Caffarelli's argument uses in a crucial way the concavity of the logarithm of the determinant, it is really tempting to try to exploit the stronger fact that the determinant to the power $\nicefrac{1}{d}$ is also a concave function on the space of symmetric positive definite matrices, and to see whether this can be useful to go beyond the case of log-concave densities. More precisely, we shall consider densities of the form
\[f=V^{-d}, \qquad g=W^{-d}\]
 with $W$ typically convex (in which case, we will say that $g$ is $\nicefrac{1}{d}$-concave) and $V$ semiconcave. Throughout the paper, we will always assume that $V$ and $W$ are of class $C^{1,1}_\mathrm{loc}$, bounded from below away from $0$, and of course we impose the compatibility condition
 \[\int_{\R^d} \frac{1}{V^d}=\int_{\R^d}  \frac{1}{W^d}.\]
  Note that this setting allows for heavy-tailed densities which do not necessarily have finite  second moments. Under suitable additional assumptions, we will indeed obtain a bound on the Lipschitz constant of the monotone transport between such measures, in the spirit of Caffarelli's result.

\smallskip

As a first step of independent interest, we establish some linear growth estimates on the monotone transport.

 \begin{thm}\label{boundT0}
 Assume that, for some $p>1$, $V^{1/p}$ is Lipschitz  and
 \begin{equation}\label{hyp0}
 \dliminf_{ \vert y \vert \to \infty} \frac{ W(y)}{\vert y \vert^p}>0, \qquad  \dliminf_{ \vert y \vert \to \infty} \frac{\nabla W(y) \cdot y}{W(y)}>1. 
 \end{equation}
 Then, the monotone transport $T$ from $V^{-d}$ to $W^{-d}$ satisfies \begin{equation}\label{Tlinear}
 \vert T(x)\vert \leq C(1+ \vert x \vert), \qquad \forall \,x\in \R^d,
 \end{equation}
for some explicit constant $C>0$.
 \end{thm}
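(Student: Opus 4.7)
The plan is a mass-comparison argument based on the convexity of the Brenier potential $\varphi$ (so that $T=\nabla\varphi$ is monotone) and the pushforward condition $T_\#(V^{-d}\,dx)=W^{-d}\,dy$. The idea is that if $M:=|T(x_0)|$ is large, then by monotonicity the image under $T$ of a whole cone with apex $x_0$ must lie in the exterior region $\{|y|\geq M\cos\theta\}$, and the resulting mass inequality will force $M$ to be controlled linearly by $|x_0|$.

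\textbf{Step 1 (monotonicity on a cone).} I would fix $x_0$ with $y_0:=T(x_0)\neq 0$, set $M:=|y_0|$ and $e:=y_0/M$, and, for $\theta\in(0,\pi/2)$, introduce the cone $\mathcal{C}_\theta:=\{v\in\R^d: v\cdot e\geq|v|\cos\theta\}$. For any $x=x_0+v$ with $v\in\mathcal{C}_\theta$, the monotonicity of $T=\nabla\varphi$ yields $(T(x)-y_0)\cdot v\geq 0$, whence $T(x)\cdot v\geq M(e\cdot v)\geq M|v|\cos\theta$; Cauchy--Schwarz then gives $|T(x)|\geq M\cos\theta$. In particular, $T(x_0+\mathcal{C}_\theta)\subseteq\{|y|\geq M\cos\theta\}$.

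\textbf{Step 2 (mass balance and estimates).} The pushforward identity then supplies
\[
\int_{x_0+\mathcal{C}_\theta}V^{-d}(x)\,dx \;=\; \int_{T(x_0+\mathcal{C}_\theta)}W^{-d}(y)\,dy \;\leq\; \int_{\{|y|\geq M\cos\theta\}}W^{-d}(y)\,dy.
\]
Since $V^{1/p}$ is Lipschitz, $V(x)\leq C(1+|x|)^p$ and hence $V^{-d}(x)\geq c(1+|x|)^{-pd}$; restricting the cone integral to $\{v\in\mathcal{C}_\theta:|v|\geq 2|x_0|\}$, where $|x_0+v|\in[|v|/2,3|v|/2]$, and passing to polar coordinates inside the cone (using $p>1$, so that $pd>d$) should produce a lower bound of the form
\[
\int_{x_0+\mathcal{C}_\theta}V^{-d}(x)\,dx \;\geq\; c(\theta)(1+|x_0|)^{-d(p-1)}.
\]
By the first part of \eqref{hyp0}, $W(y)\geq c_W|y|^p$ for $|y|\geq R_0$, so provided $M\cos\theta\geq R_0$,
\[
\int_{\{|y|\geq M\cos\theta\}}W^{-d}(y)\,dy \;\leq\; C'(M\cos\theta)^{-d(p-1)}.
\]
If $M\cos\theta<R_0$ then $M\leq R_0/\cos\theta$ is already a linear bound; otherwise, cancelling the common exponent $d(p-1)$ gives $M\cos\theta\leq C(\theta)(1+|x_0|)$, and fixing e.g.\ $\theta=\pi/4$ concludes with an explicit constant $C$ depending on the Lipschitz constant of $V^{1/p}$, on $c_W$, $R_0$, and on $d,p$.

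The main technical obstacle is the explicit lower bound on the left-hand side cone integral: one must integrate the slowly-decaying density $(1+|x|)^{-pd}$ over a cone with apex $x_0$ extending to infinity and extract a clean power $(1+|x_0|)^{-d(p-1)}$. Restricting to $|v|\geq 2|x_0|$ linearizes the comparison $|x_0+v|\sim|v|$ and reduces the task to a one-dimensional integral over $[2|x_0|,\infty)$, where the exponent $p>1$ provides convergence at infinity. The second liminf condition in \eqref{hyp0} does not appear to be strictly needed for this linear estimate---$p>1$ together with the first liminf suffices---and is presumably exploited in the finer (Lipschitz and sublinear) refinements developed later in the paper.
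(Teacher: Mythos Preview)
Your argument is correct and genuinely different from the paper's. The paper proceeds by a Pogorelov-type maximum principle: after truncating $W$ so that $\nabla\varphi$ is bounded, it looks at a maximum point of $\dfrac{|\nabla\varphi|^2}{2\varphi}$, differentiates the Monge--Amp\`ere equation $\log\det D^2\varphi = d\log W(\nabla\varphi)-d\log V$, and combines this with the arithmetic--geometric inequality $\Delta\varphi\ge d\,\frac{W(\nabla\varphi)}{V}$ to reach
\[
M\Big[\tfrac{\nabla W(\nabla\varphi)\cdot\nabla\varphi}{W(\nabla\varphi)}-1\Big]\ \le\ \tfrac{\nabla V\cdot\nabla\varphi-W(\nabla\varphi)}{V},
\]
from which the second condition in \eqref{hyp0} and the Lipschitz bound on $V^{1/p}$ give a universal bound on $M$; a short mass-balance argument is then used only to locate the minimum point of $\varphi$. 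By contrast, your proof is purely measure-theoretic: monotonicity of $T$ on a cone with apex $x_0$, plus the tail bounds $V^{-d}(x)\gtrsim(1+|x|)^{-pd}$ and $W^{-d}(y)\lesssim|y|^{-pd}$, already force $|T(x_0)|\lesssim 1+|x_0|$.

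Your route is more elementary (no differentiation of the equation, no truncation/compactness step) and, as you observe, does not use the second liminf in \eqref{hyp0}. What the paper's approach buys is a template: replacing $\varphi$ by $H(\varphi)$ in the maximized ratio yields the sublinear bounds of Theorems~\ref{thm:pq} and~\ref{thm:pol to gauss}, where the cone/mass-comparison argument does not obviously give the right exponent. One small point of presentation: your displayed equality $\int_{x_0+\mathcal C_\theta}V^{-d}=\int_{T(x_0+\mathcal C_\theta)}W^{-d}$ uses a.e.\ injectivity of $T$; it is cleaner (and sufficient) to write directly $\int_{x_0+\mathcal C_\theta}V^{-d}\le \int_{T^{-1}(\{|y|\ge M\cos\theta\})}V^{-d}=\int_{\{|y|\ge M\cos\theta\}}W^{-d}$, which follows from the inclusion $x_0+\mathcal C_\theta\subset T^{-1}(\{|y|\ge M\cos\theta\})$ alone.
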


 Using the previous result together with the concavity of $\det^{\nicefrac{1}{d}}$ will enable us to obtain global Lipschitz bounds \`a la Caffarelli. More precisely, let
 \[ \langle x \rangle:= \sqrt{1 + \vert x \vert^2}, \qquad \forall \,x\in \R^d.\]
When $V$, $W$, and their derivatives satisfy suitable power-like growth conditions,  we can bound the Lipschitz constant of the monotone transport.
 \begin{thm}\label{boundDT0}
 Assume that there exist exponents $p\geq q>1$ and constants $\lambda>0$ and $\Lambda \geq 0$ such that 
 \begin{equation}\label{semiconvexconcave}
  D^2 W \geq \lambda   \langle \; \cdot  \; \rangle^{p-2} \, \id  \quad\mbox{ and }\quad D^2 V \leq \Lambda     \langle \; \cdot \;  \rangle^{q-2} \,  \id, \qquad \mbox{  a.e. on $\R^d$},
 \end{equation}
and that 
\begin{equation}\label{powerlike}
 \langle \, \cdot \, \rangle^{1-p}  \vert \nabla W \vert, \;  \langle \; \cdot \;  \rangle^{1-q}  \vert \nabla V \vert, \;  \frac{\langle \, \cdot \, \rangle^{q} }{V}, \mbox{ and } \; \frac{  \langle \, \cdot \, \rangle^{q-1}}{ 1+ \vert \nabla V  \vert} 
\quad \mbox{ belong to $L^{\infty}(\R^d)$.}
  %\vert \nabla W \vert%\; \frac{1+ \vert \, .  \,\vert^2}{V}, \;  \nabla V\vert}{1+ \vert \, . \, \vert}, \;  \frac{ \vert \, . \,  \vert}{1 +  \vert \nabla V\vert} \in L^{\infty}(\R^d)
 \end{equation}
Then, the monotone transport $T$ from $V^{-d}$ to $W^{-d}$  is $K$-Lipschitz for some explicit constant $K$.    
  \end{thm}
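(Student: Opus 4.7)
The plan is to adapt Caffarelli's maximum-principle argument to the $\nicefrac{1}{d}$-concave setting, replacing the log-concavity of $\det$ by the stronger concavity of $\det^{\nicefrac{1}{d}}$ on symmetric positive definite matrices. The Brenier potential $\varphi$ is $C^{2,\alpha}_{\mathrm{loc}}$ by the regularity theory recalled in the introduction, so all derivatives manipulated below are understood classically.

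I would first verify that the hypotheses of Theorem~\ref{boundT0} are contained in those of Theorem~\ref{boundDT0}. From $V\gtrsim\langle\cdot\rangle^q$ and $|\nabla V|\lesssim\langle\cdot\rangle^{q-1}$ (consequences of \eqref{powerlike}), the identity $\nabla V^{\nicefrac{1}{p}}=\tfrac{1}{p}V^{\nicefrac{1}{p}-1}\nabla V$ shows that $V^{\nicefrac{1}{p}}$ is Lipschitz precisely because $q\le p$; and from the strong convexity $D^2W\ge\lambda\langle\cdot\rangle^{p-2}\id$ combined with the positivity of $W$, integrating twice yields both $W(y)\gtrsim|y|^p$ at infinity and $\liminf_{|y|\to\infty}\nabla W(y)\cdot y/W(y)>1$. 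Theorem~\ref{boundT0} then provides the linear estimate $|\nabla\varphi(x)|\le C(1+|x|)$, which will be used pointwise throughout the rest of the proof.

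For the Lipschitz bound itself, I would set $A:=D^2\varphi$ and $h(x):=W(\nabla\varphi(x))/V(x)$, so that the Monge--Amp\`ere equation becomes $(\det A)^{\nicefrac{1}{d}}=h$. Differentiating twice in an arbitrary unit direction $e\in\mathbb{S}^{d-1}$ and invoking the concavity of $M\mapsto(\det M)^{\nicefrac{1}{d}}$ yields the pivotal pointwise inequality
$$L[\varphi_{ee}]\;\ge\;\frac{d\,h_{ee}}{h}, \qquad L := A^{ij}\partial_{ij},$$
which is strictly stronger than the logarithmic version $L[\varphi_{ee}]\ge d(\log h)_{ee}$ underlying Caffarelli's original argument; this extra strength will be crucial at the maximum point. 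Since $\R^d$ is noncompact, I would then localize by considering $F_R:=\chi_R\,\varphi_{ee}$ with $\chi_R$ a smooth cutoff at scale $R$ (or equivalently $\varphi_{ee}-\eps\langle x\rangle^\gamma$ for small $\eps>0$), selecting an interior maximum point $x_0$, and letting $R\to\infty$ (respectively $\eps\to 0$) at the end.

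At this maximum, $\nabla F_R(x_0)=0$ and $L[F_R](x_0)\le 0$ combine with the displayed inequality and the explicit expansion of $h_{ee}=\partial_{ee}(W(\nabla\varphi)/V)$ to give an algebraic inequality involving $D^2W(\nabla\varphi)$, $D^2V$, and the cross product $(\nabla W(\nabla\varphi)\cdot\nabla\varphi_e)(\nabla V\cdot e)$. The leading coercive contribution $\lambda\langle\nabla\varphi\rangle^{p-2}|\nabla\varphi_e|^2$ (from the semiconvexity of $W$) is compared against a right-hand side that, by \eqref{semiconvexconcave}--\eqref{powerlike} and the linear bound $|\nabla\varphi|\lesssim\langle x\rangle$, is of order $\langle x\rangle^{p-2}$; since $|\nabla\varphi_e|\ge\varphi_{ee}$ and $\langle\nabla\varphi\rangle\asymp\langle x\rangle$ at the critical point, this delivers the uniform bound $\varphi_{ee}(x_0)\le K$. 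The \textbf{main obstacle} in this last step is the cross term $-2(\nabla W(\nabla\varphi)\cdot\nabla\varphi_e)(\nabla V\cdot e)/V$, which is of the same order as the coercive term and would be completely absorbed by a naive Cauchy--Schwarz: this is precisely where the full strength of $\nicefrac{1}{d}$-concavity intervenes, since the expansion of $h_{ee}$ (unlike that of $(\log h)_{ee}$) carries an \emph{additional} positive term $2W(\nabla V\cdot e)^2/V^2$ which, via a sharp Young splitting, can be paired with the cross term to preserve enough coercivity. The matching upper and lower bounds on $|\nabla V|$ in \eqref{powerlike} and the restriction $q\le p$ are both essential for making the resulting estimate scale-invariant.
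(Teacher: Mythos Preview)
Your formal computation---differentiating $(\det D^2\varphi)^{\nicefrac{1}{d}}=W(\nabla\varphi)/V$ twice, using the concavity of $\det^{\nicefrac{1}{d}}$, and exploiting the extra positive term $2W(\nabla V\cdot e)^2/V^2$ that distinguishes $h_{ee}$ from $(\log h)_{ee}$---is exactly the paper's formal argument (carried out in Section~\ref{par-formal}). One minor slip in your algebraic endgame: you do \emph{not} need, and cannot justify, a two-sided bound $\langle\nabla\varphi\rangle\asymp\langle x\rangle$. After the Young splitting the right-hand side scales like $\langle\bar y\rangle^{p}/\langle\bar x\rangle^{2}$ while the coercive term is $\langle\bar y\rangle^{p-2}\varphi_{ee}^2$, so the $\langle\bar y\rangle$-powers cancel and one is left with $\varphi_{ee}\lesssim\langle\bar y\rangle/\langle\bar x\rangle\le 1+C$ by Theorem~\ref{boundT0} alone.

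The genuine gap is the localization step. Neither multiplying by a cutoff $\chi_R$ nor subtracting $\eps\langle x\rangle^\gamma$ yields a usable maximum principle here: both generate error terms of the form $\varphi_{ee}\,L[\chi_R]$ or $\eps\,L[\langle x\rangle^\gamma]$ with $L=(D^2\varphi)^{-1}_{ij}\partial_{ij}$, and you have no a priori control on $\mathrm{tr}\,(D^2\varphi)^{-1}$ (the smallest eigenvalue of $D^2\varphi$ may degenerate, and the only information available is on $\det D^2\varphi$). Nor do you know in advance that $\varphi_{ee}-\eps\langle x\rangle^\gamma$ attains a maximum at all. The paper explicitly flags this obstruction (``maximizing sequences for the second derivatives of $\varphi$ need not be bounded'') and replaces your localization by two different devices: first, the target is truncated to $W_R$ supported in $B_R$, so that $|\nabla\varphi|\le R$; second, one works with the incremental ratio $\varphi^\eps(x,e)=\varphi(x+\eps e)+\varphi(x-\eps e)-2\varphi(x)$ instead of second derivatives, and uses an asymptotic-cone lemma from \cite{CFJ} to show $\varphi^\eps\to 0$ at infinity, so a genuine maximizer $(\bar x(\eps),\bar e(\eps))$ exists on $\R^d\times\mathbb S^{d-1}$ with no penalty. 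One must then separately handle the case $|\bar x(\eps)|\to\infty$ as $\eps\to 0$, which requires additional incremental-ratio hypotheses (\eqref{hypABasymp}--\eqref{hyplambdaLambdaasymp} in the paper); the proof of Theorem~\ref{boundDT0} itself consists in checking that the power-like assumptions \eqref{semiconvexconcave}--\eqref{powerlike} imply all of these, together with the hypotheses of Theorem~\ref{boundT0}.
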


In section \ref{sec-point}, we prove Theorem \ref{boundT0} 
and explore several of its variants.  Note that (possibly nonlinear) growth estimates were obtained by Colombo and Fathi \cite{ColomboFathi}, using alternative methods based on concentration inequalities. Section \ref{sec-LipT} is devoted  
to establishing limits on the Lipschitz constant of the monotone transport map. This discussion includes a range of relevant assumptions that cover Theorem \ref{boundDT0} as a special case.
 
 \section{Pointwise bounds on $T$}\label{sec-point}

 \subsection{Proof of Theorem \ref{boundT0}}

For $R\geq 1$, let us replace $W$ by $W_R$:
\[W_R(y)=\begin{cases} C_R W(y) \mbox{ if $\vert y \vert \leq R$}\\ +\infty \mbox{ otherwise}\end{cases}\]
where $C_R:=(\int_{B_R} W^{-d})^{\nicefrac{1}{d}}$  is the normalizing constant making $W_R^{-d}$ a probability density, so that $C_R\to 1$ as $R\to \infty$. Then, the optimal transport map sending $f=V^{-d}$ to $W_R^{-d}$ is globally bounded.  Our aim is to prove a linear estimate on such a map, independent of $R$. 

To simplify the notation, we shall still denote by $T=\nabla \varphi$ the optimal transport map sending $f=V^{-d}$ to $W_R^{-d}$.
Also, we remove the subscript $R$ from $W_R$, that therefore will be denoted by $W$.

Normalize $\varphi$ so that $\min \varphi=1$. Since 
$\varphi(x) \to \infty$ as $\vert x \vert \to \infty$ and $\vert \nabla \varphi\vert$ is bounded by $R$, the positive function $\frac{\vert \nabla \varphi\vert^2}{ 2 \varphi}$ tends to $0$ as $\vert x\vert \to \infty$, hence achieves its maximum\footnote{This is the only reason in the proof to approximate $W$ with $W_R$, so that the density $W_R^{-d}$ is supported inside $B_R$.} at some $\bar x$. Setting
$$
u:=\frac{|\nabla \varphi|^2}{2},\qquad M=\max \frac{u}{\varphi}
$$
 then, at a maximum point  $\bar x$,  we have 
\begin{equation}\label{maxusurfi}
\nabla u=M \nabla \varphi \quad \mbox{ and } \quad D^2 u \leq M D^2 \varphi.
\end{equation}
Writing the Monge-Amp\`ere equation \eqref{eq:MA}  in log form, namely
$$
\log\det D^2\varphi=d\log W\circ \nabla \varphi - d\log V,
$$ 
we differentiate it in the direction $e_k$ (note that it follows from  Theorem 1.1 in \cite{CorderoFigalli} that $\varphi \in C^{3, \alpha}_{\mathrm{loc}}$). Then, with the convention of summation over repeated indices, we get
\begin{equation}\label{differentiateonce}
B^{ij}\varphi_{ijk}=d\frac{W_i}{W}\circ \nabla \varphi \varphi_{ik}-d\frac{V_k}{V},\qquad B:=(D^2\varphi)^{-1}.
\end{equation}
Now, we multiply \eqref{differentiateonce} by $\varphi_k$ and sum over $k=1,\ldots,d$.
Using
$$
\varphi_{ijk}\varphi_k=u_{ij}-\varphi_{ik}\varphi_{jk}, \qquad B^{ij}\varphi_{ik}\varphi_{jk}={\rm tr}D^2\varphi=\Delta\varphi,
$$
and noting that, at $\bar x$, thanks to  \eqref{maxusurfi}, we have
$$
\varphi_{ik} \varphi_k=M \varphi_i, \qquad B^{ij}u_{ij}\leq MB^{ij} v_{ij}=M{\rm tr} \id=Md,
$$
we deduce the following inequality
$$
Md-\Delta\varphi\geq d\biggl(M \frac{\nabla W}{W}\circ\nabla\varphi \cdot \nabla \varphi -\frac{\nabla V}{V}\cdot \nabla \varphi\biggr).
$$
We now remark that, by  the arithmetic-geometric mean inequality and using again the Monge-Amp\`ere equation \eqref{eq:MA}, we have
\[\Delta \varphi\geq d \big(\det D^2\varphi\big)^{\nicefrac{1}{d}}=d \frac{W\circ \nabla \varphi}{V}.\]
 Thus, we arrive at
$$
M-\frac{W\circ \nabla \varphi}{V} \geq M \frac{\nabla W}{W}\circ\nabla\varphi \cdot \nabla \varphi -\frac{\nabla V}{V}\cdot \nabla \varphi,
$$
i.e.,
\begin{equation}\label{bornesurM}
M \left[\frac{\nabla  W(\nabla \varphi)} {W(\nabla \varphi)} \cdot{\nabla \varphi}-1 \right] \leq \frac{\nabla V \cdot \nabla \varphi-W(\nabla \varphi)}{V} \leq  \frac{\vert \nabla V\vert }{V}\vert \nabla \varphi \vert.
\end{equation}
 Now our assumption \eqref{hyp0} implies that there exist positive constants $R_0$, $\delta_0>0$ and $C_0$ such that whenever $\vert y \vert \geq R_0$, one has  
 \begin{equation}\label{conseqhyp0}
\frac{\nabla  W(y) \cdot y} {W(\nabla y)} \geq 1+\delta_0, \qquad W(y) \geq C_0 \vert y \vert^p.
\end{equation}
Note that, if $\vert  \nabla \varphi (\bar x)  \vert\leq R_0$,  using $\varphi \geq 1$, we have $M \leq \frac{R_0^2}{2}$.

If, on the contrary,  $\vert  \nabla \varphi (\bar x)  \vert \geq R_0$, the first inequality in \eqref{conseqhyp0} implies that the left-hand side of \eqref{bornesurM} is positive.
In particular, the second term in \eqref{bornesurM} must be positive, namely $W(\nabla \varphi(\bar x))<\nabla V (\bar x)\cdot\nabla \varphi(\bar x)$. This, combined with the second inequality in \eqref{conseqhyp0}, yields
\[C_0  \vert \nabla \varphi(\bar x) \vert^p \leq  W(\nabla \varphi(\bar x)) \leq \vert \nabla V (\bar x) \vert \vert  \nabla \varphi(\bar x) \vert,\]
hence
\[ \vert \nabla \varphi(\bar x) \vert \leq \frac{ \vert \nabla V(\bar x) \vert^{\frac{1}{p-1}} }{C_0^{\frac{1}{p-1}}}.  \]
Combining all these bounds with \eqref{bornesurM}, we finally obtain
\[\delta_0 M \leq \frac{\vert \nabla V(\bar x) \vert^{\frac{p}{p-1}} }{ C_0^{\frac{1}{p-1}} V(\bar x)}.\] 
Finally, since $V^{1/p}$ is Lipschitz,
\[ \frac{\vert \nabla V(\bar x) \vert^{\frac{p}{p-1}} }{V(\bar x)}\leq \Big(p \; \mathrm{Lip} \; V^{1/p} \Big)^{\frac{p}{p-1}}.\]
In conclusion, we have found a universal bound on $M$:
\[M\leq M_0:=\max \bigg\{ \frac{R_0^2}{2}, \frac{\Big(p \; \mathrm{Lip} \; V^{1/p} \Big)^{\frac{p}{p-1}}}{\delta_0 C_0^{\frac{1}{p-1}} } \bigg\}.\]
Now, using
$$
|\nabla \sqrt{\varphi}|=\frac{|\nabla\varphi|}{2\sqrt{\varphi}}=\sqrt{\frac{u}{2\varphi} },
$$
we deduce that $\sqrt{\varphi}$ is $\sqrt{\frac{M_0}{2}}$ Lipschitz. Hence, denoting by $x^*$ the point where $\varphi$ is minimal  (i.e., with our convention, $\varphi(x^*)=1$), for all $x\in \R^d$ we have
\[\sqrt{\varphi(x)}  \leq 1 + \sqrt{\frac{M_0}{2}} \vert x-x^*\vert \leq 1 + \sqrt{\frac{M_0}{2}} (\vert x\vert+ \vert x^*\vert),\]
therefore, since $\frac{u}{\varphi}\leq M_0$,
\begin{equation}\label{boundwithx*}
 \vert \nabla \varphi(x)\vert \leq  \sqrt{2 M_0} + M_0 \vert x^*\vert +M_0 \vert x \vert.
 \end{equation}
The proof will therefore be complete once we find a universal bound on $x^*$, which can be done  as in \cite{Caffarelli,CFJ}  by a simple mass balance argument that we recall for the sake of completeness.

Given $ \alpha \in \mathbb S^{d-1}$, define the cone $K_\alpha:=\{y\in \R^d \; \; \alpha \cdot y \geq \frac{1}{2} \vert y\vert\}$. Since $T(x^*)=0$, the monotonicity of $T$ implies the inequality $T(x)\cdot (x-x^*)\geq 0$ for every $x$. In particular,  if $T(x) \in K_{x^* / \vert x^*\vert}$ we have
\[T(x)\cdot x \geq T(x) \cdot x^* \geq \frac{1}{2} \vert x^*\vert \vert T(x),\vert \]
which implies $|x|\geq \frac12 |x^*|$. Hence, we obtain $T^{-1}(K_{x^* / \vert x^*\vert}\setminus\{0\}) \subset \R^d \setminus B_{\frac{\vert x^*\vert}{2}}$. Thanks to $T_\# f=g$, this yields
\[a(g):=\inf_{\alpha\in  \mathbb S^{d-1}}  \int_{K_\alpha} g \leq \int_{\R^d \setminus B_{\frac{\vert x^*\vert}{2}}}f.\]
Since $\int_{\R^d\setminus B_r}f\to 0$ as $r\to +\infty$, we get a bound on $\vert x^*\vert$ that only depends on $f$ and $g$:
\[ \vert x^*\vert \leq  2  \sup\bigg\{r>0 \; :  \; \int_{\R^d\setminus B_r}f \geq  a(g) \bigg\}.\]
Using this bound\footnote{Note that, since $1/g \in L^{\infty}_{\mathrm{loc}}$, one can bound from below $a(g)$  regardless of the radius $R\geq 1$ we used in the beginning to truncate $g$.} in \eqref{boundwithx*} completes the proof of Theorem \ref{boundT0}. 
 
 \begin{rem}
 Another way to bound $M$ from above is to come back to \eqref{bornesurM} and bound its right-hand side in  terms of $W^*$, the Legendre transform of $W$:
\[ M \left[\frac{\nabla  W(\nabla \varphi)} {W(\nabla \varphi)} \cdot{\nabla \varphi}-1 \right] \leq \frac{\nabla V \cdot \nabla \varphi-W(\nabla \varphi)}{V}\leq \frac{ W^*(\nabla V)}{V}.\]
Under our assumption, it is possible to prove the existence of a constant $C>0$ such that $W^*(\nabla V ) \leq C V$. Then, combining this bound with the second condition in \eqref{hyp0}, we reach the same conclusion as in Theorem \ref{boundT0}. 

While in this case both arguments give the same result, the method presented in the proof of Theorem \ref{boundT0} seems more flexible than the one discussed here. In particular, we shall use a variation of it in the proof of Theorems~\ref{thm:pq} and~\ref{thm:pol to gauss}.
 \end{rem}

 \subsection{Variants} We want to stress the fact that a similar proof as that of Theorem \ref{boundT0} can lead to sublinear estimates on the optimal transport map. The idea is to consider a maximum point of $\frac{\vert \nabla \varphi\vert^2}{2 H(\varphi)}$ for some well chosen positive, increasing and smooth function $H$ such that $H(t)\to +\infty$ as $t\to +\infty$. Up to replacing $W$ by $W_R$ as we did above (and normalizing again $\varphi$ by fixing its minimum to $1$), one can assume the maximum is achieved at some point $\bar x$ and the goal is to estimate
 \[M:= \max \frac{\vert\nabla \varphi\vert^2}{2 H(\varphi)}=  \frac{\vert\nabla \varphi (\bar x) \vert^2}{2 H(\varphi(\bar x))}. \]
 At $\bar x$, one has
 \[D^2 \varphi \nabla \varphi =M H'(\varphi) \nabla \varphi, \quad \varphi_{ijk} \varphi_k + \varphi_{ik} \varphi_{jk} \leq M[H'(\varphi) \varphi_{ij}+ H''(\varphi) \varphi_i \varphi_j].\]
Hence, as in the proof of Theorem \ref{boundT0}, multiplying \eqref{differentiateonce} by $\varphi_k$ and summing over $k$ gives (recall the relation $B=D^2 \varphi^{-1}$) 
 \[\begin{split}
 d\bigg(M H'(\varphi) \frac{\nabla W(\nabla \varphi) \cdot \nabla \varphi }{W(\nabla \varphi) } &    -\frac{\nabla V}{V}\cdot \nabla \varphi\bigg)= B^{ij} \varphi_{ijk} \varphi_k\\
& \leq B^{ij} \Big(M[H'(\varphi) \varphi_{ij}+ H''(\varphi) \varphi_i \varphi_j] -\varphi_{ik} \varphi_{jk} \Big) \\
&= d M H'(\varphi) + MH''(\varphi) B \nabla \varphi \cdot \nabla \varphi-\Delta \varphi\\
& \leq d M H'(\varphi) + \frac{ H''(\varphi)}{H'(\varphi)} \vert \nabla \varphi\vert^2 -d \frac{W (\nabla \varphi)}{V},
 \end{split}
 \]
 where, in the last line, we have used the arithmetic-geometric mean inequality and that, at $\bar x$, one has $ MD^2 \varphi^{-1} \nabla \varphi=\frac{\nabla \varphi}{H'(\varphi)}$. Since $\vert\nabla \varphi \vert^2= 2 M H(\varphi)$ at the point $\bar x$, we arrive at the inequality
 \begin{equation}\label{boundMavecH}
 M \bigg[ \frac{\nabla W(\nabla \varphi) \cdot \nabla \varphi }{W(\nabla \varphi)} -1 -\frac{ 2 H''(\varphi) H(\varphi)}{d H'(\varphi)^2}\bigg] \leq  \frac{\nabla V \cdot \nabla  \varphi-W(\nabla \varphi)}{ H'(\varphi) \;  V}. 
 \end{equation}
 An easy consequence of \eqref{boundMavecH} is the following:
 \begin{thm}
 \label{thm:pq}
 Assume that for some $p,q>1$ such that
 \begin{equation}\label{strangecondition}
 d(q-1)(p-1)>q-p
 \end{equation}
 one has
 \begin{equation}\label{hyp0p}
 \dliminf_{ \vert y \vert \to \infty} \frac{ W(y)}{\vert y \vert^p}>0, \qquad    \dliminf_{ \vert y \vert \to \infty} \frac{\nabla W(y) \cdot y}{W(y)}\geq p, 
 \end{equation}
 and, for some $A>0$,
 \begin{equation}\label{hyp0q}
 \frac{\vert \nabla V(x)  \vert}{V(x)} \leq \frac{A}{1+ \vert x\vert}, \quad \vert \nabla V(x)  \vert \leq A (1+\vert x\vert)^{q-1}, \qquad \forall \,x\in \R^d.
 \end{equation}
   Then  the monotone transport $T$ from $V^{-d}$ to $W^{-d}$ satisfies
 \[\vert T(x) \vert \leq C(1+ \vert x \vert)^{\frac{q-1}{p-1}}\qquad \forall \,x\in \R^d,\]
for some explicit constant $C>0$.
 \end{thm}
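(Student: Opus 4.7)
The plan is to mimic the proof of Theorem~\ref{boundT0} while working with the quotient $u/H(\varphi)$ for a well-chosen power $H(t)=t^\beta$, and to invoke the general inequality \eqref{boundMavecH} instead of \eqref{bornesurM}. The exponent $\beta$ should be dictated by the target growth: integrating $|\nabla\varphi|\le\sqrt{2M}\,\varphi^{\beta/2}$ makes $\varphi^{1-\beta/2}$ Lipschitz and hence produces a bound $|\nabla\varphi|\lesssim(1+|x|)^{\beta/(2-\beta)}$, which suggests taking
\[
\beta:=\frac{2(q-1)}{p+q-2},\qquad\text{so that}\qquad\frac{\beta}{2-\beta}=\frac{q-1}{p-1}.
\]
After the same truncation of $W$ as in Theorem~\ref{boundT0} and the normalization $\min\varphi=1$, I would let $M$ be the maximum of $u/\varphi^\beta$, attained at some $\bar x$. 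A direct computation gives $2H''(\varphi)H(\varphi)/(dH'(\varphi)^2)=(q-p)/(d(q-1))$, so by \eqref{hyp0p} the bracket on the left-hand side of \eqref{boundMavecH} is, whenever $|\nabla\varphi(\bar x)|\ge R_0$ (for $R_0$ large enough), at least a constant $c_0>0$, whose strict positivity is exactly condition \eqref{strangecondition}.

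The next step is to analyse the case $|\nabla\varphi(\bar x)|\ge R_0$, since otherwise $M\le R_0^2/2$ thanks to $\varphi\ge 1$ and we are done. In this regime, positivity of the right-hand side of \eqref{boundMavecH} combined with $W(y)\ge C_0|y|^p$ from \eqref{hyp0p} and the second bound in \eqref{hyp0q} yields the pointwise inequality
\[
|\nabla\varphi(\bar x)|\le C_1\,(1+|\bar x|)^{(q-1)/(p-1)},
\]
which, using $|\nabla\varphi(\bar x)|^2=2M\,\varphi(\bar x)^\beta$, translates into an upper bound on $\varphi(\bar x)^\beta$ of order $M^{-1}(1+|\bar x|)^{2(q-1)/(p-1)}$. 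Separately, estimating the right-hand side of \eqref{boundMavecH} using the first bound in \eqref{hyp0q} and again using $|\nabla\varphi(\bar x)|=\sqrt{2M}\,\varphi(\bar x)^{\beta/2}$ gives
\[
c_0\,\sqrt{M}\,(1+|\bar x|)\le \frac{A\sqrt{2}}{\beta}\,\varphi(\bar x)^{1-\beta/2},
\]
which is a lower bound on $\varphi(\bar x)^{2-\beta}$ of order $M(1+|\bar x|)^2$.

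The heart of the argument, and the main obstacle, is the combination of these two inequalities so as to cancel simultaneously $\varphi(\bar x)$ and $|\bar x|$. Raising the upper bound on $\varphi(\bar x)^\beta$ to the exponent $(p-1)/(q-1)=(2-\beta)/\beta$ and comparing with the lower bound on $\varphi(\bar x)^{2-\beta}$, the precise choice of $\beta$ guarantees that both sides carry the same factor $(1+|\bar x|)^2$, so that the $\bar x$-dependence cancels exactly and one is left with a purely algebraic inequality of the form $M^{(p+q-2)/(q-1)}\le\mathrm{const}$, hence a universal bound $M\le M_0$. Without the identity built into the definition of $\beta$, the two sides would differ by a residual polynomial growth in $|\bar x|$ that cannot be controlled.

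Once $M$ is bounded, the argument runs as in Theorem~\ref{boundT0}: the inequality $|\nabla\varphi^{1-\beta/2}|\le(1-\beta/2)\sqrt{2M_0}$ makes $\varphi^{1-\beta/2}$ globally Lipschitz, the mass-balance argument of Theorem~\ref{boundT0} bounds the minimizer $x^*$ of $\varphi$ uniformly in the truncation radius $R$, and letting $R\to\infty$ yields
\[
|T(x)|=|\nabla\varphi(x)|\le\sqrt{2M_0}\,\varphi(x)^{\beta/2}\le C\,(1+|x|)^{(q-1)/(p-1)},
\]
which is the claimed estimate.
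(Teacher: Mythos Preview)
Your proof is correct and follows essentially the same route as the paper's: the exponent you call $\beta$ is the paper's $\theta=2(q-1)/(p+q-2)$, and your two-step elimination of $\varphi(\bar x)$ and $|\bar x|$ (upper bound on $\varphi^\beta$ from $|\nabla\varphi(\bar x)|\lesssim(1+|\bar x|)^{(q-1)/(p-1)}$, lower bound on $\varphi^{2-\beta}$ from the right-hand side of \eqref{boundMavecH}) is an equivalent repackaging of the paper's single chain $M\lesssim M^{1-1/\theta}$. The remaining ingredients (identification of the bracket with condition \eqref{strangecondition}, Lipschitz bound on $\varphi^{1-\beta/2}$, mass-balance bound on $x^*$, passage $R\to\infty$) coincide.
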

 
 \begin{proof}
 Let
\[\alpha=\frac{q-1}{p-1}, \qquad \theta=\frac{2 \alpha}{1+\alpha}=\frac{2 q-2}{p+q-2}<2. \]
Our aim is to bound
\[M:= \max \frac{ \vert \nabla \varphi \vert^2}{2 \varphi^\theta}.\]
For this, we choose the function $H(\varphi)=\varphi^{\theta}$ in \eqref{boundMavecH} so that, at the maximum point $\bar x$, we get
 \begin{equation}\label{majoMtheta}
 M \left[\theta \Big(\frac{\nabla  W(\nabla \varphi)} {W(\nabla \varphi)} \cdot{\nabla \varphi}-1\Big)+\frac{2-2\theta}{d} \right] \leq \varphi^{1-\theta} \frac{\nabla V \cdot \nabla \varphi-W(\nabla \varphi)}{V}.
 \end{equation}
As in the proof of Theorem \ref{boundT0}, it suffices to consider the case when $\vert \nabla \varphi (\bar x) \vert$ is large. In particular, by \eqref{hyp0p} and \eqref{strangecondition}, the prefactor in front of $M$ in the left-hand side of \eqref{majoMtheta} is strictly positive. This implies the nonnegativity of the right-hand side, that combined with \eqref{hyp0p} and \eqref{hyp0q} gives
 \begin{equation}\label{boundnablafialpha}
 \vert \nabla \varphi( \bar x)\vert  \lesssim  \vert \nabla V(\bar x) \vert^{\frac{1}{p-1}} \lesssim (1+ \vert \bar x \vert)^{\alpha}
 \end{equation}
 (here, the symbol $\lesssim$ is just hiding a positive multiplicative constant). Then, using \eqref{majoMtheta}, \eqref{hyp0q}, and the definition of $M$, we have
 \[\begin{split}
 M &\lesssim \frac{1}{1+ \vert \bar x \vert} \frac{ \vert \nabla \varphi (\bar x) \vert }{\varphi^{\frac{\theta}{2}} (\bar x)} \varphi^{1-\frac{\theta}{2}} (\bar x) \lesssim  \frac{M^{\frac{1}{2}}}{1+ \vert \bar x \vert} \frac{ \vert \nabla \varphi(\bar x)\vert^{\frac{2-\theta}{\theta}}}{M^{\frac{1}{\theta}-\frac{1}{2}}} \\
 & \lesssim M^{1-\frac{1}{\theta}} \frac{ (1+ \vert \bar x \vert)^{\frac{\alpha(2-\theta)}{\theta}  }}{1+ \vert \bar x \vert}  \lesssim M^{1-\frac{1}{\theta}}  \end{split}  \]
 where, in the second line, we used \eqref{boundnablafialpha} and $\frac{2-\theta}{\theta}=\frac{1}{\alpha}$.
 %, arguing as in the proof of Theorem \ref{boundT0}, we deduce
 %\[\varphi (\bar x) \lesssim (1+ \vert \bar x \vert)^{1+\alpha}.\]
  %We thus get
 %\[M \lesssim \frac{ \varphi(\bar x)^{1-\frac{\theta}{2} } \vert \nabla V(\bar x)\vert   \vert \nabla \varphi (\bar x) \vert} {V(\bar x)  \varphi (\bar x)^{\frac{\theta}{2}} } \lesssim  M^{\frac{1}{2}} (1+ \vert \bar x \vert)^{(1-\frac{\theta}{2})(1+\alpha)-1}=M^{\frac{1}{2}} \]
 %since by construction $(1-\frac{\theta}{2})(1+\alpha)=1$. 
 This gives a universal bound on $M$, and therefore on the Lipschitz constant of $\varphi^{1-\frac{\theta}{2}}$. So, arguing as in the end of the proof of Theorem \ref{boundT0} (using a bound on the norm of the minimum point of $\varphi$), we obtain
 \[ \vert T\vert = \vert \nabla \varphi \vert \lesssim \varphi^{\frac{\theta}{2}} \lesssim (1 + \vert  \cdot  \vert)^{\frac{\theta}{2-\theta}}= (1 + \vert   \cdot  \vert)^{\alpha},\]
 as desired.
 \end{proof}

 \begin{rem}
 It is unclear to us whether the condition \eqref{strangecondition} (that is automatically satisfied when $p\geq q$ or when $d$ is large) is really necessary to obtain a superlinear bound on $\vert T \vert$ when $p<q$.
 
 \end{rem}
 
 Using the same method, a similar  sublinear estimate can be obtained when the target is Gaussian:
 
 \begin{thm}
 \label{thm:pol to gauss}
 Assume that $g$ is a Gaussian and that there exists $A$ such that
  \begin{equation}\label{hyp0gq}
 \frac{\vert \nabla V(x)  \vert}{V(x)} \leq \frac{A}{1+ \vert x\vert}\qquad \forall \,x\in \R^d.
 \end{equation}
 Then the monotone transport $T$ from $V^{-d}$ to $g$ satisfies
  \[\vert T(x) \vert \leq C\sqrt{1+ \log(1+ \vert x \vert)}\qquad \forall \,x\in \R^d,\]
   for some explicit constant $C>0$.
 \end{thm}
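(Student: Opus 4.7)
The plan is to extend the scheme of Theorems~\ref{boundT0} and~\ref{thm:pq} by applying the general inequality~\eqref{boundMavecH} with the tailor-made choice $H(\varphi) := \log(1+\varphi)$, which enjoys the handy identities $1/H'(\varphi) = 1+\varphi$ and $H\,H''/(H')^2 = -\log(1+\varphi)$. Normalizing $\min\varphi = 1$, truncating $W$ to $W_R$ so that $M := \sup|\nabla\varphi|^2/(2H(\varphi))$ is attained at some $\bar x$, and using that for a Gaussian target one has $W = c_g\, e^{|\cdot|^2/(2d)}$ (whence $\nabla W(y)\cdot y/W(y) = |y|^2/d$), the inequality at $\bar x$ specializes to
\[
M\left[\frac{2(M+1)\log(1+\varphi(\bar x))}{d} - 1\right] \;\leq\; (1+\varphi(\bar x))\,\frac{\nabla V(\bar x)\cdot\nabla\varphi(\bar x) - W(\nabla\varphi(\bar x))}{V(\bar x)}.
\]
A case split follows: if the bracketed factor is $\leq 0$, then $\log(1+\varphi(\bar x)) \leq d/(2(M+1))$, so $|\nabla\varphi(\bar x)|^2 = 2M\log(1+\varphi(\bar x)) \leq d$, and $\log(1+\varphi(\bar x)) \geq \log 2$ yields $M \leq d/(2\log 2)$. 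Otherwise the RHS must be positive, forcing $|\nabla V(\bar x)|\,|\nabla\varphi(\bar x)| \geq W(\nabla\varphi(\bar x)) \gtrsim e^{|\nabla\varphi(\bar x)|^2/(2d)}$; combining with $|\nabla V(\bar x)| \leq AV(0)(1+|\bar x|)^{A-1}$ (obtained by integrating~\eqref{hyp0gq}) and taking logarithms, one gets
\[
|\nabla\varphi(\bar x)|^2 \leq C_1\bigl(1+\log(1+|\bar x|)\bigr).
\]

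To turn this into a uniform bound $M \leq M_0$, the key missing ingredient is the matching lower bound $\log(1+\varphi(\bar x)) \gtrsim \log(1+|\bar x|)$. I would establish this via a linear lower bound $\varphi(x) \gtrsim 1+|x|$ at infinity. First, Theorem~\ref{boundT0} applies---the Gaussian $W$ trivially satisfies~\eqref{hyp0} for every $p>1$, and $V^{1/p}$ is Lipschitz whenever $p \geq \max(A,2)$ by~\eqref{hyp0gq}---so $|T(x)| \leq C_0(1+|x|)$. Regularity theory then makes $T = \nabla\varphi$ a $C^{2,\alpha}$-diffeomorphism of $\R^d$, and the mass-balance argument of Theorem~\ref{boundT0} forces $T^{-1}(B_S)$ to be bounded for every $S \geq 1$, hence $T$ is proper. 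In particular $T^{-1}(\mathbb{S}^{d-1})$ lies in some ball $B_{R_1}$; for $|x| \geq 4R_1$, picking $x_e \in B_{R_1}$ with $T(x_e) = x/|x|$ and invoking convexity,
\[
\varphi(x) \geq \varphi(x_e) + \frac{x}{|x|}\cdot(x - x_e) \geq 1 + |x| - 2R_1 \geq \frac{|x|}{2},
\]
so $\log(1+\varphi(\bar x)) \gtrsim \log(1+|\bar x|)$ for large $|\bar x|$; bounded $|\bar x|$ is handled directly via the previous bound on $|\nabla\varphi(\bar x)|^2$ together with $\log(1+\varphi(\bar x)) \geq \log 2$. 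Combining gives $M \leq M_0$ uniformly.

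To conclude, the pointwise inequality $|\nabla\varphi|^2 \leq 2M_0\log(1+\varphi)$ says that $G(\varphi(\cdot))$, with $G(\tau) := \int_1^\tau dt/\sqrt{\log(1+t)}$, is $\sqrt{2M_0}$-Lipschitz; since $G(\tau) \sim \tau/\sqrt{\log\tau}$ as $\tau \to \infty$, this yields $\varphi(x) \lesssim (1+|x|)\sqrt{\log(1+|x|)}$, hence $\log(1+\varphi(x)) \leq C(1+\log(1+|x|))$. Plugging back gives $|T(x)| = |\nabla\varphi(x)| \leq \sqrt{2M_0\log(1+\varphi(x))} \leq C\sqrt{1+\log(1+|x|)}$, as desired. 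The most delicate point is ensuring that the radius $R_1$ in the lower-bound argument can be taken independent of the truncation parameter $R$ used to make $M$ attained; this should follow from standard stability and properness of Brenier maps against the truncated Gaussians, but it is the most technical step of the plan.
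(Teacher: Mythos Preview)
Your setup matches the paper's: both take $H(\varphi)=\log(c+\varphi)$ in \eqref{boundMavecH} (the paper uses $c=e$ and, by concavity of $H$, simply drops the favourable $H''$-term to obtain \eqref{boundMavecHconcave}; you use $c=1$ and retain it, getting the slightly sharper bracket $\tfrac{2(M+1)}{d}\log(1+\varphi)-1$). Both then extract $|\nabla\varphi(\bar x)|^2\lesssim 1+\log(1+|\bar x|)$ from positivity of the right-hand side, and your final integration via $G$ is the paper's $\Phi$. The genuine divergence is how you close the bound on $M$. You look for a \emph{lower} bound $\varphi(\bar x)\gtrsim|\bar x|$ via properness of $T$ and a preimage argument; the paper instead gets an \emph{upper} bound in one line from convexity---the supporting hyperplane of $\varphi$ at $\bar x$, evaluated at the minimiser $x^*$, gives $\varphi(\bar x)\le 1+|\nabla\varphi(\bar x)|\,(|\bar x|+|x^*|)\lesssim(1+|\bar x|)\sqrt{1+\log(1+|\bar x|)}$---and then substitutes this back into the main inequality (rewritten as $\sqrt{M}\lesssim \tfrac{1}{1+|\bar x|}\cdot\tfrac{e+\varphi(\bar x)}{\sqrt{\log(e+\varphi(\bar x))}}$) to obtain $\sqrt{M}\lesssim 1$ directly. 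No appeal to Theorem~\ref{boundT0}, to the inverse map, or to properness is needed.

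The step you flag as ``most delicate'' is a real gap in your route, not a formality. For each fixed truncation $R$ the map $T_R$ is a diffeomorphism onto $B_R$, so $T_R^{-1}(\mathbb S^{d-1})$ is automatically compact; the mass-balance argument you invoke is not what gives this, and in any case it only locates the single point $x^*=T_R^{-1}(0)$. What you actually need is that the radius $R_1$ stays bounded as $R\to\infty$, i.e.\ a uniform-in-$R$ bound on the inverse transport $S_R=T_R^{-1}$ restricted to the unit sphere. Theorem~\ref{boundT0} cannot be run in the reverse direction (its source hypothesis ``$W^{1/p}$ Lipschitz'' fails for the Gaussian). Stability of Brenier maps should eventually yield the required uniformity, but the paper's one-line convexity upper bound on $\varphi(\bar x)$ sidesteps the whole issue.
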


 \begin{proof}
 Performing a translation and a suitable change of coordinates if necessary, me may assume that $W$ is of the form $W(y)=c_d e^{\frac{\vert y\vert^2}{2d}}$ (so that $g=W^{-d}$ is a standard Gaussian).
 This time, we take $H(\varphi):=\log(e+\varphi)$ and consider
 \[M:= \max \frac{ \vert \nabla \varphi \vert^2}{2 \log(e+\varphi)}.\]
 Note that the function $H$ is concave, so that \ref{boundMavecH} implis in this case
 \begin{equation}\label{boundMavecHconcave}
  M \bigg[ \frac{\nabla W(\nabla \varphi) \cdot \nabla \varphi }{W(\nabla \varphi)} -1\bigg] \leq  \frac{\nabla V \cdot \nabla  \varphi-W(\nabla \varphi)}{ H'(\varphi) \;  V}. 
  \end{equation}

 At a maximum point $\bar x$ (again such a point exists provided we replace $W$ by $W_R$ as in the proof of Theorem \ref{boundT0}), \eqref{boundMavecHconcave} becomes
 \[ M\left[\frac{2M}{d}\log(e+\varphi)-1\right]= M \left[\frac1d\vert \nabla  \varphi \vert^2 -1 \right] \leq (e+\varphi) \frac{\nabla V \cdot \nabla \varphi-W(\nabla \varphi)}{V}.\]
Again, we look for an explicit  bound on $M$. If $M \leq d$ there is nothing to prove. If $M>d$, the left-hand side is larger than $M\log(e+\varphi(\bar x)))$ (remember $\varphi\geq 1>0$) therefore, at the point $\bar x$, we have
  \begin{equation}\label{boundMgauss}
 M \leq \frac{ (e+\varphi)}{\log(e+ \varphi)} \frac{\nabla V \cdot \nabla \varphi-W(\nabla \varphi)}{V}.
  \end{equation}
As in the proof of Theorem \ref{boundT0}, the positivity of the right-hand side implies
 \[c_d e^{\frac{\vert \nabla \varphi(\bar x)\vert^2}{2d}} \leq \vert \nabla V(x) \vert  \vert \nabla \varphi(\bar x)\vert.\]
Thus, assuming without loss of generality that $\vert \nabla \varphi(\bar x)\vert$ is large (because if not, we automatically get a bound on $M$), the left-hand side is larger than $\vert \nabla \varphi(\bar x)\vert e^{\frac{\vert \nabla \varphi(\bar x)\vert^2}{4d}}$ and we get
 \[ \vert \nabla \varphi(\bar x)\vert  \leq 2d \sqrt{ \log ( \vert \nabla V(\bar x)\vert)} \lesssim  \sqrt{ 1+\log (1 + \vert \bar x\vert)}\]
 where, in the second inequality, we used the fact that \eqref{hyp0gq} implies a power growth condition on $\nabla V$.
 
 Denoting by $x^*$ the minimum point of $\varphi$ and arguing as in the last step of the proof of Theorem \ref{boundT0} in order to bound $|x^*|$, we get 
 \begin{equation}\label{logboundfi}
  \varphi(\bar x) \leq 1 + \vert \nabla \varphi(\bar x)\vert (\vert \bar x\vert + \vert x^*\vert) \leq C (1+\vert \bar x\vert) \sqrt{1+\log (1 + \vert \bar x\vert)}.
  \end{equation}
Also, by \eqref{boundMgauss}, the definition of $M$, and \eqref{hyp0gq}, we have
  \[M  \leq   \frac{  A(e+\varphi(\bar x))}{\log(e+ \varphi(\bar x)) \; (1+ \vert \bar x\vert) } \vert \nabla \varphi(\bar x) \vert  = \frac{A \sqrt{2 M}}{ (1+ \vert \bar x\vert)} \frac{ (e+\varphi(\bar x))}{\sqrt{\log(e+ \varphi(\bar x)) }}.\]
  Hence, combining this bound with \eqref{logboundfi} and the fact that $t\in [1,+\infty)\mapsto \frac{e+t}{\sqrt{\log(e+t)}}$ is increasing, we get
\begin{multline}
  \sqrt{M} \leq \frac{A \sqrt{2}}{ (1+ \vert \bar x\vert)} \frac{ (e+\varphi(\bar x))}{\sqrt{\log(e+ \varphi(\bar x)) }}\\
   \lesssim \frac{ \sqrt{1+\log (1 + \vert \bar x\vert)}}{\sqrt{ 1 + \log((1+\vert \bar x\vert) \sqrt{1+\log (1 + \vert \bar x\vert )})} } \lesssim 1,
  \end{multline}
  which gives the desired bound on $M$.
  
  Defining now 
  \[\Phi(t):=\int_1^t \frac{1} {\sqrt{\log(e+s)}} \mbox{d}s, \qquad t\geq 1,\]
  we see that the function $\Phi \circ \varphi$ is $\sqrt{2 M}$ Lipschitz. Hence
    \[\Phi(\varphi(x)) \leq  \sqrt{2M}(\vert x\vert + \vert x^*\vert) \leq C(1+ \vert x\vert), \qquad \forall \,x\in \R^d,\]
which implies
  \begin{equation}\label{boundgradinverse}
   \vert \nabla \varphi(x) \vert \leq  \sqrt{2M} \sqrt{\log\big(e+ \Phi^{-1}(C(1+ \vert x\vert))\big)}.
   \end{equation}
 Noticing that $\Phi^{-1}(0)=1$ and $(\Phi^{-1})'=\sqrt{\log(e+ \Phi^{-1})}  \leq \sqrt{\Phi^{-1}}$, one easily gets the subquadratic bound $\Phi^{-1}(t)\leq (1+\frac{1}{2}t)^2$, that together with \eqref{boundgradinverse} gives the desired estimate on $\vert T\vert$. 
   \end{proof}

 \section{Bounding the Lipschitz constant of $T$}\label{sec-LipT}
 
 \subsection{A formal argument}\label{par-formal} Assume that the optimal transport $T=\nabla \varphi$ from $V^{-d}$ to $W^{-d}$ has at most linear growth, i.e., it satisfies \eqref{Tlinear} for some constant $C$ (as is the case for instance under the assumptions of Theorem \ref{boundT0}). Also  assume that there are constants $A$, $B$, $\lambda>0$, and $\Lambda \ge 0$ such that
 \begin{equation}\label{hypAB}
 \frac{1+ \vert \nabla V(x)\vert^2}{V(x)^2} \leq \frac{A^2}{ \langle  x\rangle^2}, \quad \frac{W(x)^2} {1+ \vert \nabla W(x)\vert^2} \leq B^2 \langle  x\rangle^2, \qquad \forall\, x\in \R^d
 \end{equation}
 and, for a.e.  $x\in \R^d,$
 \begin{equation}\label{hyplambdaLambda}
 D^2 V(x) \leq  \frac{\Lambda(1+|\nabla V(x)|^2) \id }{V(x)},\qquad   D^2 W(x)  \geq \frac{ \lambda(1+|\nabla W(x)|^2) \id}{W(x)}.
 \end{equation}
 \begin{rem}\label{commentonfirstcondition}
 Since we have
\[ D^2\Big (-\frac{1}{V}\Big)=\frac{D^2 V }{V^2}-2 \frac{ \nabla V \otimes \nabla V}{V^3} , \]
we obtain
\[ \frac{D^2 V }{V^2}\geq D^2\Big (-\frac{1}{V}\Big) \geq \frac{D^2 V }{V^2}- 2 \frac{ \vert \nabla V \vert^2}{V^3} \id. \]
Thus, the first condition in  \eqref{hyplambdaLambda} amounts to a semiconcavity condition on $-\frac{1}{V}$ bounding from above its Hessian by a multiple of $ \frac{1+ \vert \nabla V \vert^2}{V^3} \id$. This explains the first condition that will appear in the sequel in \eqref{hyplambdaLambdaasymp}, which may look strange at first glance but is simply an incremental ratio version of this semiconcavity assumption. 
 \end{rem}
 
 We now show how,
 under assumptions \eqref{hypAB}-\eqref{hyplambdaLambda}, one can (at least formally) derive a bound on the Lipschitz constant of $T$, i.e., the largest eigenvalue of $D^2 \varphi$ by an argument \`a la Caffarelli. Then, in the next section we will show how to make the argument rigorous.
 
Let $\bar x, \bar e$ maximize the function $\R^d\times \mathbb S^{d-1}\ni(x,e)\mapsto \la D^2\varphi(x)e,e\ra$,
and assume, without loss of generality, $\bar e=e_1$. Then
 \begin{equation}\label{condmaxd2phi}
\nabla \varphi_{11}=0, \quad D^2 \varphi_{11} \leq 0, \quad \varphi_{i1}=0 \mbox{ for $i\neq 1$},
\end{equation}
where the last condition follows from the fact that $e_1$ has to be an eigenvector of $D^2 \varphi (\bar x)$. Writing the Monge-Amp\`ere equation in the form
\begin{equation}\label{mongeampere1:d}
F(D^2 \varphi)=\frac{W\circ \nabla \varphi}{V} \quad \mbox{ where $F=\det^{\nicefrac{1}{d}}$}
\end{equation}
and differentiating it first once and then twice  with respect to the $x_1$ variable, we get
\[F'(D^2 \varphi ) D^2 \varphi_1=\frac{W_i \varphi_{i1}}{V}-\frac{V_1W}{V^2},\]
and 
\begin{multline}
\label{eq:2nd der}
  \la F''(D^2 \varphi) D^2 \varphi_1, D^2 \varphi_1 \ra + F'(D^2 \varphi) D^2 \varphi_{11} =\frac{W_{ik}\varphi_{i1}\varphi_{k1}}{V} +\frac{W_i\varphi_{i11}}{V}\\
  -2\frac{W_i\varphi_{i1}V_1}{V^2}-\frac{V_{11}W}{V^2}+2\frac{V_1^2W}{V^3},
  \end{multline}
 where $W$  and its derivatives are evaluated at $\nabla \varphi$. Since $F$ is concave on the space of symmetric definite positive matrices, the first term in the left-hand side is nonpositive. Since,  at the point $\bar x$, $D^2 \varphi_{11} \le 0$ and $F$ is  nondecreasing  (in the sense of   matrices)   on the space of symmetric semidefinite positive matrices, the second term in the right-hand side of \eqref{eq:2nd der} is nonpositive at the point $\bar x$.  Thanks to \eqref{condmaxd2phi}, at the point $\bar x$ the right-hand side of  \eqref{eq:2nd der} simplifies to
\[\frac{W_{11}\varphi_{11}^2}{V} 
  -2\frac{W_1 V_1\varphi_{11}}{V^2}-\frac{V_{11}W}{V^2}+2\frac{V_1^2W}{V^3}.\]
Thus, setting $\bar y:=\nabla \varphi (\bar x)$, we deduce
\begin{multline}\label{crucialmaxd2fi}
W_{11} (\bar y) \varphi_{11}(\bar x)^2 \leq \frac{V_{11} (\bar x) W(\bar y)}{V(\bar x)} +\frac{2 V_1 (\bar x) W_1(\bar y) \varphi_{11}(\bar x)}{V(\bar x)}-\frac{ 2 V_1(\bar x) ^2 W(\bar y)}{V(\bar x)^2} \\
 \leq  \frac{V_{11} (\bar x) W(\bar y)}{V(\bar x)} + \frac{ W_{11} (\bar y) \varphi_{11}(\bar x)^2}{2}+\frac{2 V_1(\bar x) ^2 W_1(\bar y)^2}{V(\bar x)^2 W_{11}(\bar y)},
\end{multline}
where we have used Young's inequality in the last line and discarded the last (negative) term of the previous one. Therefore, we arrive at 
\[\frac{ W_{11} (\bar y) \varphi_{11}(\bar x)^2}{2} \leq \frac{V_{11} (\bar x) W(\bar y)}{V(\bar x)} + \frac{2 V_1(\bar x) ^2 W_1(\bar y)^2}{V(\bar x)^2 W_{11}(\bar y)}\]
which, thanks to \eqref{hyplambdaLambda}, yields
\begin{equation}\label{majorfi11square}
 \varphi_{11}(\bar x)^2 \leq \Big(\frac{2\Lambda}{\lambda} + \frac{4}{\lambda^2}\Big) \frac{1 + \vert \nabla V (\bar x) \vert^2}{V(\bar x)^2} \frac{W (\bar y)^2}{1 + \vert \nabla W (\bar y) \vert^2}.
 \end{equation}
Thus, \eqref{hypAB} and \eqref{Tlinear} imply that, denoting by $M=M( A, B, \lambda, \Lambda)$ the explicit constant 
\[M=\sqrt{\frac{2\Lambda}{\lambda} + \frac{4}{\lambda^2}} AB,\]
one has 
\[ \max_{x,e}\la D^2\varphi(x)e,e\ra=\varphi_{11}(\bar x) \leq M \frac{1 + \vert \bar  y\vert }{ 1+ \vert \bar x\vert}= M \frac{1 + \vert  \nabla  \varphi(\bar x) \vert }  { 1+ \vert \bar x\vert} \leq  M (1+C),\]
where $C$ is the constant in \eqref{Tlinear}.

This shows that $T=\nabla \varphi$ is $M(1+C)$ Lipschitz, as desired. Unfortunately, this argument is only formal not only because it assumes $\varphi$ to be four times differentiable (which anyhow could be assumed by approximation) but, more importantly, because maximizing sequences for the second derivatives of $\varphi$ need not be bounded. Hence, to give rigorous statements and proofs, we shall use incremental ratios (similarly to \cite{CFJ}) as explained in the next section.

 \subsection{Using incremental ratio} Given a function $h : \R^d \to \R$, a small constant $\eps>0$, and $(x,e)\in \R^d\times \mathbb S^{d-1}$, set
 \begin{equation}\label{defsecondfd}
 h^\eps(x,e):=h(x+\eps e)+h(x-\eps e)-2 h(x).
 \end{equation}
 For $R>0$ (chosen large in a way that will be specified later), we replace $W$ by $W_R$ as we did in the beginning of the proof of Theorem \ref{boundT0}. Then, by an abuse of notation, we denote the monotone transport map from $V$ to $W_R$ by $T=\nabla \varphi$. To obtain a Lipschitz bound on $T$, our goal is to find an upper bound that does not depend on $R$ on the quantity
 \[\limsup_{\eps\to 0^+} \; \eps^{-2}  \sup_{\R^d  \times   \mathbb S^{d-1}} \varphi^{\eps}.\]
 Indeed, this will enable us to conclude thanks to the stability of the monotone transport map as $R\to \infty$. Thanks to Lemma 3.1 in \cite{CFJ},  denoting $\hat x:=\frac{x}{\vert x\vert}$ for $x\in \R^d\setminus\{0\}$, one has
 \begin{equation}\label{asympcone}
    T(x)-R \hat{x} \to 0 \mbox{ as $\vert x \vert \to +\infty$}.
    \end{equation}
 This implies in particular that, for any fixed $\eps>0$, the positive function $\varphi^\eps(x,e)=\int_0^\eps (T(x+t\eps)-T(x-t\eps)) \cdot e \, dt$ tends to $0$ as $\vert x \vert \to \infty$, uniformly in $e\in  \mathbb S^{d-1}$. In particular, it achieves its maximum at some point $\bar x, \bar e$ (for the moment, we do not explicitly write the dependence of  $\bar x, \bar e$ in terms of $\eps$). Setting $\bar y:= \nabla \varphi (\bar x)$, the fact that $\bar x, \bar e$ maximizes $\varphi^\eps$ over $\R^d  \times   \mathbb S^{d-1}$ yields
 \begin{equation}\label{optiinx}
 \nabla \varphi(\bar x + \eps \bar e)+   \nabla \varphi(\bar x - \eps \bar e)=2 \bar y, \quad D^2 \varphi (\bar x + \eps \bar e)+   D^2 \varphi(\bar x - \eps \bar e)\leq 2 D^2 \varphi (\bar x) 
 \end{equation}
 and that, for some $\delta\in \R$, one has
 \begin{equation}\label{opitiine}
 \nabla \varphi(\bar x + \eps \bar e)-   \nabla \varphi(\bar x - \eps \bar e)=2 \delta \bar e.
 \end{equation}
Thus, using the notation $\partial_{\bar e} \varphi=\nabla \varphi \cdot \bar e$, we have
 \begin{equation}\label{deltaeps}
  \nabla \varphi(\bar x + \eps \bar e)=\bar y\pm \delta \bar e, \quad \delta=\frac{1}{2}(\partial_{\bar e} \varphi(\bar x + \eps \bar e)- \partial_{\bar e} \varphi(\bar x - \eps \bar e)).
  \end{equation}
  In particular, it follows from \eqref{deltaeps} that we have
\begin{multline}\label{fiepsdeltaeps} \sup_{\R^d  \times   \mathbb S^{d-1}} \varphi^{\eps}=
\varphi^\eps (\bar x ,\bar e)=\int_0^\eps \big( \partial_{\bar e} \varphi(\bar x+ t\bar e)-\partial_{\bar e} \varphi(\bar x -t\bar e)\big)\,dt \\
\leq \int_0^\eps \big( \partial_{\bar e} \varphi(\bar x+ \eps \bar e)-\partial_{\bar e} \varphi(\bar x -\eps\bar e)   \big)\,dt =2\eps \delta,
\end{multline}
where we have used the convexity of $\varphi$ to obtain
\begin{equation}\label{dpepsmon}
 \partial_{\bar e} \varphi(\bar x+ t\bar e) \leq  \partial_{\bar e} \varphi(\bar x+ \eps \bar e), \quad  \partial_{\bar e} \varphi(\bar x- t\bar e) \geq  \partial_{\bar e} \varphi(\bar x- \eps \bar e) ,\qquad \forall\,t \in (0,\eps).
\end{equation}
Hence, to prove $\varphi^\eps(\bar x, \bar e) \leq K\eps^2$ for some universal constant $K$, it will be sufficient to show $\delta \leq K\eps$ for some universal  constant $K$.

\smallskip

Denoting as before $F=\det^{\nicefrac{1}{d}}$, the second condition in \eqref{optiinx} yields
\[\begin{split}
F(D^2 \varphi(\bar x))  &\geq F\Big(\frac{1}{2}  D^2 \varphi (\bar x + \eps \bar e)+ \frac{1}{2}   D^2 \varphi(\bar x - \eps \bar e) \Big)\\
&\geq \frac{1}{2}  F\Big(  D^2 \varphi (\bar x + \eps \bar e)\Big)+ \frac{1}{2}  F\Big(  D^2 \varphi (\bar x - \eps \bar e)\Big)
\end{split}\]
where we used again the concavity of $F$. Using the Monge-Amp\`ere equation \eqref{mongeampere1:d} at the points $\bar x$, $\bar x \pm \eps \bar e$, and recalling \eqref{deltaeps}, we obtain
\[\begin{split}
0&\geq \frac{W(\bar y + \delta \bar e)}{V(\bar x+ \eps \bar e)}+ \frac{W(\bar y - \delta \bar e)}{V(\bar x- \eps \bar e)} -2 \frac{W(\bar y)} {V(\bar x)}\\
&=\frac{W(\bar y + \delta \bar e)+ W(\bar y - \delta \bar e)-2 W(\bar y)}{V(\bar x)} \\
&+ \Big(W(\bar y + \delta \bar e)- W(\bar y)\Big) \Big( \frac{1}{V(\bar x+ \eps \bar e)}-\frac{1}{V(\bar x)}\Big)   \\
&+\Big(W(\bar y - \delta \bar e)- W(\bar y)\Big) \Big( \frac{1}{V(\bar x- \eps \bar e)}-\frac{1}{V(\bar x)}\Big)\\
&+ W(\bar{y}) \Big( \frac{1}{V(\bar x+\eps \bar e)}+ \frac{1}{V(\bar x- \eps \bar e)}- \frac{2}{V(\bar x)} \Big).
\end{split}\]
Thus, multiplying by $V(\bar x)$ and using the notation in \eqref{defsecondfd}, we obtain a discrete analog of \eqref{crucialmaxd2fi}:
\begin{equation}\label{crucialdiscreteineq}
W^{\delta}(\bar y, \bar e) \leq \Gamma^+ + \Gamma^-+ V(\bar x)W(\bar y) \Big(-\frac{1}{V}\Big)^\eps( \bar x, \bar e),
\end{equation}
where 
\begin{equation}\label{deflambdapm}
 \Gamma^\pm:=(W(\bar y \pm \delta \bar e)- W(\bar y)) \frac{V(\bar x \pm \eps \bar e)-V(\bar x)}{V(\bar x \pm \eps \bar e)}.
\end{equation}
The goal now is to find suitable assumptions ensuring that \eqref{crucialdiscreteineq} implies  $\delta \leq K \eps$.
This is the purpose of the next section.

 \subsection{Assumptions ensuring that $T$ is globally Lipschitz}
We aim to find assumptions guaranteeing that \eqref{crucialdiscreteineq} implies a universal bound $\delta \leq K \eps$. Indeed, thanks to  \eqref{fiepsdeltaeps}, this will imply a uniform bound on the eigenvalues of $D^2 \varphi$.

We shall assume that $V$ and $W$ satisfy assumptions \eqref{hypAB} and \eqref{hyplambdaLambda}  that naturally appeared in the formal discussion in Section \ref{par-formal}. However, to address the case where the maximizer $\bar x= \bar x(\eps)$ escapes at $\infty$ as $\eps \to 0$, we shall need extra hypotheses that are to, some extent, asymptotic incremental ratio versions of  \eqref{hypAB} and \eqref{hyplambdaLambda} (recall Remark \ref{commentonfirstcondition}). Still, as we shall see, the constants from these extra assumptions do not appear in the final Lipschitz bound.
 
 These extra assumptions are as follows: There exist $A_0,B_0,\lambda_0,\Lambda_0,R_0,\alpha_0>0$ such that, for every $z\in \R^d\setminus B_{R_0}$, $e\in  \mathbb S^{d-1}$,  and $\alpha \in (0, \alpha_0)$, the following holds:
 \begin{equation}\label{hypABasymp}
\frac{ \vert V(z+\alpha e)-V(z)\vert }{ \alpha V(z+\alpha e)}  \leq \frac{ A_0  }{1+ \vert z \vert}, \quad  \frac{ \vert W(z+\alpha e)-W(z)\vert  W(z)}{ \alpha(1+ \vert \nabla W (z)\vert^2)}  \leq B_0  (1+ \vert z \vert),
 \end{equation}
 \begin{equation}\label{hyplambdaLambdaasymp}
  \alpha^{-2}\Big(-\frac{1}{V}\Big)^{\alpha}(z,e) \leq \frac{\Lambda_0(1+|\nabla V(z)|^2)  }{V(z)^3} ,\quad  \frac {W^{\alpha}(z, e)}{\alpha^2}  \geq \frac{ \lambda_0(1+|\nabla W(z)|^2)}{W(z)}.
  \end{equation}
We are now in position to prove: 
 
 \begin{thm}\label{boundlipgen}
In addition to the assumptions of Theorem \ref{boundT0}, let $V$ and $W$  satisfy \eqref{hypAB}-\eqref{hyplambdaLambda}-\eqref{hyplambdaLambdaasymp}-\eqref{hypABasymp} for some constants  $A,B,\lambda,\Lambda> 0$, and $A_0,B_0,\lambda_0,\Lambda_0,R_0,\alpha_0$. Then, the monotone transport map $T=\nabla \varphi$ from $V^{-d}$ to $W^{-d}$ satisfies
 \begin{equation}\label{boundDT}
 DT(x)=D^2 \varphi(x) \leq K \, \id,   \qquad \forall \,x\in \R^d, %\mbox{ with }  K:=\Big(\frac{2\Lambda}{\lambda} + \frac{4}{\lambda^2}\Big)^{\frac{1}{2}} AB (1+C),
 \end{equation}
with $K:=\sqrt{\frac{2\Lambda}{\lambda} + \frac{4}{\lambda^2}}AB (1+C)$, where  $C$ is the constant in \eqref{Tlinear} obtained in the proof of Theorem \ref{boundT0}.
 \end{thm}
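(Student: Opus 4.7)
The plan is to deduce from the discrete Monge-Amp\`ere inequality \eqref{crucialdiscreteineq} a bound $\delta \leq K\eps$, with $K$ independent of both $R$ and $\eps$. By \eqref{fiepsdeltaeps}, this gives $\sup_{\R^d\times \mathbb S^{d-1}} \varphi^\eps \leq 2K\eps^2$ for every $\eps>0$, hence $D^2\varphi \leq K\,\id$ in the sense of second differences; since $\varphi\in C^{2,\alpha}_{\mathrm{loc}}$, this means \eqref{boundDT} at the level of classical Hessians, first for the truncated problem and then, by stability of the monotone transport as $R\to\infty$, for the original problem. The whole strategy mirrors the formal argument of Section~\ref{par-formal}: estimate each of the three terms in \eqref{crucialdiscreteineq} using the pointwise hypotheses \eqref{hypAB}-\eqref{hyplambdaLambda} and their asymptotic incremental-ratio counterparts \eqref{hypABasymp}-\eqref{hyplambdaLambdaasymp}, then close the inequality with Young's inequality.

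Concretely, writing $\bar y := \nabla \varphi(\bar x)$, the target estimates are
\[
W^\delta(\bar y,\bar e) \;\gtrsim\; \lambda\, \delta^2\, \frac{1+|\nabla W(\bar y)|^2}{W(\bar y)}, \qquad V(\bar x) W(\bar y)\,\Big(-\tfrac{1}{V}\Big)^{\!\eps}\!(\bar x,\bar e) \;\lesssim\; \Lambda\, \eps^2\, W(\bar y)\, \frac{1+|\nabla V(\bar x)|^2}{V(\bar x)^2},
\]
together with the cross-term bound $|\Gamma^\pm| \lesssim \eps\,\delta\, |\nabla V(\bar x)|\,|\nabla W(\bar y)|/V(\bar x)$, obtained from first-order Taylor expansions of $V$ and $W$ controlled via \eqref{hypAB} or \eqref{hypABasymp}. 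Absorbing the $\Gamma^\pm$ terms by Young's inequality against the $\delta^2$ term on the left yields the discrete analogue of \eqref{majorfi11square},
\[
\delta^2 \;\leq\; \Big(\tfrac{2\Lambda}{\lambda} + \tfrac{4}{\lambda^2}\Big)\, \eps^2\, \frac{1+|\nabla V(\bar x)|^2}{V(\bar x)^2}\cdot \frac{W(\bar y)^2}{1+|\nabla W(\bar y)|^2},
\]
and combining this with \eqref{hypAB} and the linear growth $|\bar y| \leq C(1+|\bar x|)$ furnished by Theorem~\ref{boundT0} gives $\delta^2 \leq K^2\eps^2$ with $K = \sqrt{2\Lambda/\lambda+4/\lambda^2}\,AB(1+C)$, as required.

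The principal obstacle is that the maximizer $(\bar x(\eps),\bar e(\eps))$ need not be relatively compact as $\eps\to 0^+$: if $|\bar x(\eps)|\to\infty$, the pointwise hypotheses \eqref{hypAB}-\eqref{hyplambdaLambda} do not by themselves control second differences at fixed scale $\eps$ at the base points $\bar x(\eps)$ and $\bar y(\eps)$. The argument therefore splits into two regimes. If a subsequence of maximizers stays in a compact set, the $C^{2,\alpha}_{\mathrm{loc}}$ regularity of $\varphi$, $V$, $W$ lets one replace each of the finite-difference quantities $\eps^{-2}\varphi^\eps$, $\eps^{-2}(-1/V)^\eps$, $\delta^{-2}W^\delta$ by the corresponding pointwise second derivatives up to an $o(1)$ error, after which \eqref{hypAB}-\eqref{hyplambdaLambda} apply directly and reproduce the formal argument. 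If instead $|\bar x(\eps)|\to\infty$, then for $\eps$ small we have $|\bar x(\eps)|>R_0$ and $\eps<\alpha_0$, and also $|\bar y(\eps)|\to\infty$ by \eqref{asympcone} combined with the linear growth from Theorem~\ref{boundT0}, so \eqref{hypABasymp}-\eqref{hyplambdaLambdaasymp} produce precisely the required incremental-ratio estimates at both $\bar x(\eps)$ and $\bar y(\eps)$. Crucially, the constants $A_0,B_0,\lambda_0,\Lambda_0,R_0,\alpha_0$ govern only this asymptotic regime; both cases close onto the same bound $\delta^2 \leq K^2\eps^2$, so only $A,B,\lambda,\Lambda$, and $C$ appear in the final value of $K$, consistent with the statement.
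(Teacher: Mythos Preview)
Your overall strategy and the treatment of the bounded case are essentially the paper's. The gap is in the unbounded regime $|\bar x(\eps)|\to\infty$.

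First, the claim that $|\bar y(\eps)|\to\infty$ is false: we are working with the truncated target $W_R^{-d}$ supported in $B_R$, so $|\bar y(\eps)|=|T(\bar x(\eps))|\leq R$ always, and in fact \eqref{asympcone} gives $|\bar y(\eps)|\to R$. (This does not obstruct the use of \eqref{hypABasymp}--\eqref{hyplambdaLambdaasymp} at $\bar y(\eps)$ provided one takes $R>R_0$, and one also needs $\delta(\eps)\to 0$, which again follows from \eqref{asympcone} and \eqref{opitiine}; you did not mention this.) Second, and this is the substantive point, you assert that both cases ``close onto the same bound $\delta^2\leq K^2\eps^2$'' with $K$ involving only $A,B,\lambda,\Lambda,C$. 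But in the unbounded regime your estimates are built from $A_0,B_0,\lambda_0,\Lambda_0$, which would a priori produce a different constant; you give no mechanism by which these asymptotic constants disappear. The paper's resolution is not to recover $K$ in that regime but to show that the regime is \emph{vacuous}: since $|\bar x(\eps)|\to\infty$ while $|\bar y(\eps)|$ stays bounded near $R$, the factors $\frac{1+|\bar y(\eps)|}{1+|\bar x(\eps)|}$ in the asymptotic inequality tend to $0$, forcing $\delta(\eps)=o(\eps)$; via \eqref{fiepsdeltaeps} this would give $D^2\varphi\equiv 0$, contradicting the fact that $g$ is not a Dirac mass. Hence only the bounded case actually occurs, and there the formal argument of Section~\ref{par-formal} yields exactly the announced $K$. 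This contradiction argument is what your proposal is missing, and without it the explicit value of $K$ is not justified.
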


 \begin{proof}
 Recall that we have truncated the target measure $g$ by replacing $W$ with $W_R$. This slightly affects the value of constants $\lambda$ and $B$ in the assumptions concerning $W$,
 but these values will converge to the ones of $W$ when in the end one let $R\to \infty$.
 Note also that, since $W^{-d}$ is supported inside $B_R$, the arguments of $W$ and its derivatives will also belong to $B_R$.
In addition, as in the proof of Theorem \ref{boundT0}, we have a linear bound on $T=T_R$, with a constant which converges to the value $C$ in \eqref{Tlinear} as $R\to \infty$. Hence, to prove \eqref{boundDT}, it is enough to show the same inequality for $T_R$ with these modified constants depending on $R$, and then to let $R\to +\infty$. 
For this reason and to simplify the notation, in what follows we omit the dependence on $R$. On the contrary, we shall from now on explicitly write the dependence of $\bar x$, $\bar e$ $\bar y=\nabla \varphi(\bar x)$, and $\delta$ with respect to $\eps,$ denoting them, respectively, $\bar x(\eps)$,  $\bar e (\eps)$, $\bar y(\eps)$, and $\delta(\eps)$. 
We will need to consider two cases.
  
  \smallskip
 
 \noindent  
{$\bullet$ \textbf{First case: $\bar x(\eps)$ has a limit point as $\eps \to 0^+$.}} 
In this case, up to regularizing $V$ and $W$ (which only slightly changes that value of the constants in the assumptions, and that anyhow will converge to the original values as the regularizing parameter goes to zero, we can assume that $V$ and $W$ are both of class $C^{2,\alpha}_\mathrm{loc}$ for some $\alpha>0$. This guarantees, in particular, that the function $\varphi$ is at least of class $C^4$. 

Now, up to a suitable extraction, we may assume that $\bar x(\eps)$ converges to some $\bar x$ and that $\bar e(\eps)$ converges to a unit vector, say $e_1$. Then, it is easy to check that
\begin{equation}\label{ptlimmaxd2fi}
\varphi_{11}(\bar x)=\max_{(x,e)\in \R^d \times \mathbb S^{d-1}}  \la D^2\varphi(x)e,e\ra.
\end{equation}
Thus, in this case, the formal argument described in Section~\ref{par-formal} can be applied verbatim
to deduce that
$$
\varphi_{11}(\bar x)\leq \sqrt{\frac{2\Lambda}{\lambda} + \frac{4}{\lambda^2}}AB (1+C),
$$
proving the desired estimate.

 \smallskip
 
  \noindent  
{$\bullet$ \textbf{Second case: $\vert \bar x(\eps)\vert\to \infty$ as $\eps \to 0^+$.}} Note first that, thanks to \eqref{asympcone} and \eqref{opitiine},
$$
\delta(\eps)=\frac{1}{2} \vert T (\bar x(\eps)+ \eps \bar e (\eps))- T (\bar x(\eps)- \eps \bar e (\eps))\vert\to 0
\qquad\text{as $\eps \to 0^+$.}
$$
Also, again by \eqref{asympcone},
$$
\vert \bar y(\eps)\vert=\vert T(\bar x(\eps))\vert \to R \qquad\text{as $\eps\to 0^+$.}
$$
Hence, up to choosing $R$ large enough, we may assume  that for $\eps$ small enough one can use the asymptotic assumptions \eqref{hypABasymp}-\eqref{hyplambdaLambdaasymp} concerning $W$  (respectively $V$) at  $(z, \alpha, e)=(\bar y(\eps), \delta(\eps), \pm \bar e (\eps))$ (resp. $(\bar x(\eps), \eps, \pm \bar e (\eps))$.
Thus, for $\eps>0$  small,  it follows from \eqref{crucialdiscreteineq}, \eqref{hypABasymp}, and \eqref{hyplambdaLambdaasymp},
that 
 \[\lambda_0 \delta(\eps)^2 \leq 2   A_0B_0 \frac{1+ \vert \bar y(\eps)\vert} {1+ \vert \bar x(\eps)\vert} \eps \delta(\eps)  + \Lambda_0 \frac{1 + \vert \nabla V (\bar x(\eps)) \vert^2}{V(\bar x(\eps))^2} \frac{W (\bar y(\eps))^2}{1 + \vert \nabla W (\bar y(\eps) \vert^2}  \eps^2. \]
Combining this bound with \eqref{hypAB},
we obtain
  \[\lambda_0 \delta(\eps)^2 \leq 2   A_0B_0 \frac{1+ \vert \bar y(\eps)\vert} {1+ \vert \bar x(\eps)\vert} \eps \delta(\eps)  + \Lambda_0 A^2B^2 \frac{1+ \vert \bar y(\eps)\vert^2} {1+ \vert \bar x(\eps)\vert^2} \eps^2. \]
Since $\vert \bar x(\eps)\vert\to \infty$ and $\vert \bar y(\eps)\vert\to R$ as $\eps\to 0^+$, this implies
\[\delta(\eps) =o(\eps).\]
Recalling \eqref{fiepsdeltaeps}, this implies  that $D^2 \varphi \equiv 0$, which would prove that the map $T$ is constant, and is a contradiction (since $g$ is not a Dirac mass). 
Hence, this second case cannot happen, which concludes the proof.
%To obtain the sharper bound $D^2 \varphi \leq  K \, \id$ as announced in \eqref{boundDT}, we use the same bootstrap argument as in \cite{Caffarellierratum}-\cite{CFJ}, which we recall for the sake of completeness. Assume that  $D^2 \varphi \leq A_0 \, \id$ for some $A_0>K$, since 
%\[\varphi^{\eps}(\bar x(\eps), \bar e(\eps))=\int_0^\eps \la \nabla \varphi(\bar x(\eps)+t \bar e(\eps))- \nabla \varphi(\bar x(\eps)-t \bar e(\eps)),  \bar e(\eps)\ra dt \]
%and since, for $t\in [0,\eps]$, one can bound the integrand above both by $2 A_0 t$ (since $\nabla \varphi$ is $A_0$-Lipschitz) and   by $ \la \nabla  \varphi(\bar x(\eps)+\eps \bar e(\eps))- \nabla  \varphi(\bar x(\eps)-\eps \bar e(\eps)),  \bar e(\eps)\ra= 2\delta(\eps)\leq 2 K \eps$ (because of \eqref{dpepsmon} and \eqref{deltaeps}), we have 
%\[\varphi^{\eps}(\bar x(\eps), \bar e(\eps)) \leq \int_0^\eps 2 \min(A_0 t, K \eps)dt=\eps^2 K\Big(2-\frac{K}{A_0}\Big)=:\eps^2 \Phi(A_0)\]
%from which we deduce the improved bound $D^2 \varphi \leq \Phi(A_0) \, \id \leq A_0\, \id$, it is easy to check that the sequence defined by $A_0=2 K$, $A_{n+1}=\Phi(A_n)$ decreases to $K$ so iterating the previous argument ultimately shows that $D^2 \Phi \leq K \, \id$ and  terminates the proof. 
 \end{proof}
 
 \subsection{Proof of Theorem \ref{boundDT0}.} It suffices to check that the assumptions of Theorem \ref{boundDT0} imply those of Theorem \ref{boundlipgen}. 
 
 We start from some preliminary considerations. First, note that, by \eqref{semiconvexconcave}, $W$ is convex. Thus, combining \eqref{semiconvexconcave} and the asssumption $\vert \nabla W \vert \lesssim \langle \, \cdot \, \rangle^{p-1}$, it follows that $ W  \langle \, \cdot \, \rangle^{-p} $ is bounded both from above and away from $0$ (which we shall simply denote as $W \sim \langle \, \cdot  \,\rangle^{p}$). 
 
  Using  Taylor's formula with an integral remainder, \eqref{semiconvexconcave}  yields
 \begin{eqnarray}\label{TaylorW} \langle y \rangle \vert \nabla W(y) \vert \geq \nabla W(y)\cdot y &=&W(y)-W(0) + \int_0^1 s \langle D^2 W(sy) y,  y  \rangle \mbox{d} s\notag\\
 & \geq& W(y)-W(0)+\frac{\lambda \vert y \vert^p}{p} \end{eqnarray}
 from which we deduce $(1+\vert \nabla W \vert) \sim \langle \, \cdot \, \rangle^{p-1}$.

 We list here below the different assumptions that we need to prove, and explain how to obtain them.
 \begin{itemize}
  \item Lipschitz condition on $V^{1/p}$ (an assumption in Theorem \ref{boundT0}): the fact that $V^{\frac{1}{q}}$ (hence also $V^{\frac{1}{p}}$, since $p\geq q$ and $V$ is bounded away from $0$) is Lipschitz directly follows from the second and third conditions in \eqref{powerlike}. 

 \item First condition in \eqref{hyp0} (an assumption of Theorem \ref{boundT0}): we already pointed out that we have $W \sim \langle \, \cdot  \,\rangle^{p}$; in particular, $W$ satisfies the first condition in \eqref{hyp0}.

 \item Second condition in \eqref{hyp0} (an assumption in Theorem \ref{boundT0}): this is a consequence of \eqref{TaylorW}, dividing it by $W(y)$.
 
 \item Upper bound on $\frac{W(x)^2} {1+ \vert \nabla W(x)\vert^2} $ (from \eqref{hypAB}): we saw that from \eqref{TaylorW} we obtain $(1+\vert \nabla W \vert) \sim \langle \, \cdot \, \rangle^{p-1}$; this, together with $W \sim \langle \, \cdot  \,\rangle^{p}$, provides the desired bound.
 
 \item Upper bound on $  \frac{1+ \vert \nabla V(x)\vert^2}{V(x)^2}$  (from \eqref{hypAB}): this is a consequence of the second and third conditions in \eqref{powerlike}.

\item Upper bound on $D^2V$ (first condition in \eqref{hyplambdaLambda}): one deduces from \eqref{powerlike} that $V \sim \langle \, \cdot \, \rangle^{q}$; therefore, thanks to the last condition in \eqref{powerlike}, we have $\langle \, \cdot \, \rangle^{q-2} \lesssim \frac{1 + \vert \nabla V \vert^2}{V}$. The result then follows by assumption \eqref{semiconvexconcave}.

%as we explained this is equivalent to a similar uppoer bound on $D^2(-V^{-1})$. It is then straightforward once we prove the first condition in \eqref{hyplambdaLambdaasymp}.

\item Lower bound on $D^2W$ (second condition in \eqref{hyplambdaLambda}): this is a consequence of the assumption \eqref{semiconvexconcave}, together with $(1+\vert \nabla W \vert) \sim \langle \, \cdot \, \rangle^{p-1}$ and $W \sim \langle \, \cdot  \,\rangle^{p}$.

\item Upper bound on the second-order incremental ratio of $-1/V$ (first condition in \eqref{hyplambdaLambdaasymp}):
Let $e\in  \mathbb S^{d-1}$, $\alpha \in [-1,1]$, and $z\in \R^d$ with $\vert z\vert$ large. Since $-D^2 \Big(\frac{1}{V}  \Big)\leq \frac{D^2 V}{V^2}$,  using \eqref{semiconvexconcave} and $V \sim \langle \, \cdot \, \rangle^{q}$, we get
 \[\Big(-\frac{1}{V}\Big)^{\alpha}(z,e) \leq \Lambda \int_0^\alpha \int_{-s}^s \frac{ \langle z+ \tau e \rangle^{q-2}} {V^2(z+\tau e)} \mbox{d} \tau \mbox{d} s \lesssim \langle  z \rangle^{-q-2}  \alpha^2 \]
 which implies  the first condition in 
  \eqref{hyplambdaLambdaasymp} since $\langle \, \cdot \, \rangle^{-q-2} \lesssim \frac{1 + \vert \nabla V \vert^2}{V^3}$. 
  
  \item Lower bound on the second-order incremental ratio of $W$ (second condition in \eqref{hyplambdaLambdaasymp}): this is essentially the same argument as the previous one (note, however, that the constants $\Lambda$ and $\lambda$ in  \eqref{hyplambdaLambdaasymp} may differ from those in \eqref{semiconvexconcave}.
  
  \item Upper bounds on the incremental ratios of $V$ and $W$ (the two conditions in \eqref{hypABasymp}): for large $|z|$, these two conditions can be deduced from upper bounds on $|\nabla V|/V$ and $|\nabla W|W/(1+|\nabla W|^2)$, respectively, and these two quantities are bounded thanks to the assumptions \eqref{powerlike} (in what concerns $V$) and to the fact that \eqref{TaylorW} provides $(1+\vert \nabla W \vert) \sim \langle \, \cdot \, \rangle^{p-1}$ and $W \sim \langle \, \cdot  \,\rangle^{p}$.
 \end{itemize}

\bigskip
  
{\bf Acknowledgments:}  The authors  acknowledge the support of the Lagrange Mathematics and Computing Research Center.

\bibliographystyle{plain}

\bibliography{bibli}

 \end{document}